\title
{On the Gromov--Prohorov distance}
\date{27 May, 2020; revised 2 June, 2020}
\author{Svante Janson}
\thanks{Partly supported by the Knut and Alice Wallenberg Foundation}
\address{Department of Mathematics, Uppsala University, PO Box 480,
SE-751~06 Uppsala, Sweden}
\email{svante.janson@math.uu.se}
\newcommand\urladdrx[1]{{\urladdr{\def~{{\tiny$\sim$}}#1}}}
\subjclass[2010]{60B05; 
60B10,  
54E35}   
\numberwithin{equation}{section}
\renewcommand\le{\leqslant}
\renewcommand\ge{\geqslant}
\theoremstyle{plain}
\newtheorem{theorem}{Theorem}[section]
\newtheorem{prop}[theorem]{Proposition}
\newtheorem{definition}[theorem]{Definition}
\theoremstyle{definition}
\newtheorem{exampleqqq}[theorem]{Example}
\newtheorem{remarkqqq}[theorem]{Remark}
\newenvironment{remark}{\begin{remarkqqq}}
  {\hfill\qedsymbol\end{remarkqqq}}
\theoremstyle{remark}
\newenvironment{romenumerate}[1][-10pt]{
\addtolength{\leftmargini}{#1}\begin{enumerate}
 }{\end{enumerate}}
\newcounter{oldenumi}
{\setcounter{oldenumi}{\value{enumi}}
\begin{romenumerate} \setcounter{enumi}{\value{oldenumi}}}
{\end{romenumerate}}
\newcounter{thmenumerate}
\newcounter{steps}
\newcommand{\refT}[1]{Theorem~\ref{#1}}
\newcommand{\refR}[1]{Remark~\ref{#1}}
\newcommand{\refP}[1]{Proposition~\ref{#1}}
\newcommand{\refD}[1]{Definition~\ref{#1}}
\newcommand{\refDs}[1]{Definitions~\ref{#1}}
\xdef\klockan{\the\count1.0\the\count255}
\xdef\klockan{\the\count1.\the\count255}\fi
\newcommand\nopf{\qed}   
\newcommand\set[1]{\ensuremath{\{#1\}}}
\newcommand\bigset[1]{\ensuremath{\bigl\{#1\bigr\}}}
\newcommand\xpar[1]{(#1)}
\newcommand\bigpar[1]{\bigl(#1\bigr)}
\newcommand\Bigpar[1]{\Bigl(#1\Bigr)}
\newcommand\bigsqpar[1]{\bigl[#1\bigr]}
\newcommand\xcpar[1]{\{#1\}}
\newcommand\bigabs[1]{\bigl\lvert#1\bigr\rvert}
\def\rompar(#1){\textup(#1\textup)}    
\def\xexp(#1){e^{#1}}
\newcommand\ntoo{\ensuremath{{n\to\infty}}}
\newcommand\punkt{\xperiod}    
\newcommand\iid{i.i.d\punkt}    
\newcommand\ie{i.e\punkt}
\newcommand\eg{e.g\punkt}
\newcommand{\textas}{\text{a.s.}}
\newcommand{\aex}{a.e\punkt}
\newcommand{\tend}{\longrightarrow}
\newcommand\dto{\overset{\mathrm{d}}{\tend}}
\newcommand\pto{\overset{\mathrm{p}}{\tend}}
\newcommand\bbR{\mathbb R}
\newcounter{CC}
\newcounter{cc}
\newcommand\E{\operatorname{\mathbb E{}}}
\renewcommand\P{\operatorname{\mathbb P{}}}
\newcommand\diam{\operatorname{diam}}
\newcommand\supp{\operatorname{supp}}
\newcommand\gd{\delta}
\newcommand\gf{\varphi}
\newcommand\kk{\kappa}
\newcommand\gl{\lambda}
\newcommand\gL{\Lambda}
\newcommand\eps{\varepsilon}
\newcommand\cL{{\mathcal L}}
\newcommand\cM{\mathcal M}
\newcommand\cP{\mathcal P}
\newcommand\cX{{\mathcal X}}
\newcommand\tB{\widetilde B}
\newcommand\indic[1]{\boldsymbol1\xcpar{#1}}
\newcommand\qw{^{-1}}
\newcommand\oi{\ensuremath{[0,1]}}
\newcommand\ooo{[0,\infty)}
\newcommand\push{_\sharp}
\newcommand\nn{^{(n)}}
\newcommand\nnoo{\relax}
\newcommand\zoo{\relax}
\newcommand\GHP{Gromov--Hausdorff--Prohorov}
\newcommand\GH{Gromov--Hausdorff}
\newcommand\GP{Gromov--Prohorov}
\newcommand\Xoo{X_\infty}
\newcommand\dboxx[1]{\underline{\square}_{#1}}
\newcommand\dbox{\dboxx1}
\newcommand\dboxa{\dboxx{a}}
\newcommand\leb{\gl}
\newcommand\mpp{measure preserving}
\newcommand\dGP{d_{\mathsf {GP}}}
\newcommand\dGH{d_{\mathsf {GH}}}
\newcommand\dGHP{d_{\mathsf {GHP}}}
\newcommand\dPx[1]{d_{\mathsf{P,}#1}}
\newcommand\dPa{\dPx{a}}
\newcommand\dGPx[1]{d_{\mathsf{GP,}#1}}
\newcommand\dGPa{\dGPx{a}}
\newcommand\mms{metric measure space}
\newcommand\Gto{\overset{\mathrm{G}}{\tend}}
\begin{document}

\begin{abstract} 
We survey some basic results on the Gromov--Prohorov distance
between metric measure spaces.
(We do not claim any new results.)

We give several different definitions and show the equivalence of them.
We also show
that convergence in the Gromov--Prohorov distance is equivalent
to convergence in distribution of the array of 
distances between finite sets of random points.
\end{abstract}

\maketitle

\section{Introduction}\label{S:intro}

\citet{Gromov} introduced a notion of convergence for \mms{s}
 $X=(X,d,\mu)$, where $(X,d)$ is a
complete and separable metric space, and $\mu$ is a finite Borel measure on $X$.
We assume in the sequel that $\mu$ is a probability measure, \ie,
$\mu(X)=1$; the extension to arbitrary finite measures (as in \cite{Gromov})
is straightforward and left to the reader.

Gromov's convergence can be expressed in terms of a metric, 
known as the \emph{\GP} metric. In fact, there are several natural
definitions that are either completely equivalent, or equivalent within
(small) constant factors; these include Gromov's original definition 
of $\dboxa$ \cite[$3\frac12.B$]{Gromov}, and the version $\dGP$
by \citet[p.~762]{Villani}
and \citet{Greven++}
(\refDs{D1} and \ref{D3} below, respectively).

\citet{Gromov} also considered a different notion of convergence, based on
distances between random points in the space (\refD{D0} below),
and proved a convergence criterion \cite[p.~131]{Gromov}
relating this and convergence in his metric. In fact, these are equivalent
(\refT{TG}).

The purpose of the present note is to survey some different definitions and 
give proofs of the equivalence of them.
The results all are known, and we try to give original references, but there
might be unintentional omissions.

\begin{remark}
  The \GP{} distance $\dGP$ is closely related to the 
\emph{\GH{} distance} $\dGH$ 
(\cite[Chapter 7]{Burago}, \cite[Chapter 27]{Villani})
and the
\emph{\GHP{} distance} $\dGHP$
(\cite[p.~762]{Villani}, \cite[Section 6]{Miermont}).
Informally, convergence in the \GP{} distance means that there is ``almost''
a measure preserving isometry, but this may ignore parts of the spaces with 
zero or small measure;
convergence in the \GH{} distance does not involve measures at all, and means
that the spaces are almost isometric;
convergence in the \GHP{} distance combines both aspects.
\end{remark}

\begin{remark}
We consider throughout only complete separable metric spaces.
Several of the definitions and results extend to more general metric spaces,
but there are also serious technical problems in this case, and we prefer to
say no more about it.
\end{remark}

\section{Preliminaries}\label{Sprel}

\subsection{Some notation}\label{SSnot}

We denote Lebesgue measure on $\oi$ by  $\leb$, and
let $\oi $ denote the measure space $(\oi,m)$.

If $x\in X$, where $X$ is a metric space, and $r>0$,
then $B(x,r):=\set{y\in X:d(y,x)\le r}$ is the closed ball with centre
$x$ and radius $r$.

If $X$ is a  metric space, then $\cP(X)$ is the space
of all (Borel) probability measures on $X$.
We equip $\cP(X)$ with the standard topology of weak convergence; see \eg{}
\cite{Billingsley} or \cite{Bogachev}.

If $\mu\in\cP(X)$, then $\xi\sim\mu$ means that $\xi$ is a random variable
in $X$ with distribution $\mu$.
We use $\dto$ and $\pto$ for convergence in distribution and in probability,
respectively,
of random variables.

If $\mu\in\cP(X)$, then $\supp\mu$ denotes the \emph{support} of $\mu$,
\ie, the smallest closed subset of $X$ with full measure.
We have 
\begin{align}
\supp\mu=\bigset{x\in X:\mu(B(x,r))>0\quad \forall r>0}.  
\end{align}

If $X$ and $Y$ are metric spaces, $\gf:X\to Y$ is measurable,
and $\mu\in\cP(X)$, then $\gf\push(\mu)\in\cP(Y)$
denotes the \emph{push-forward} of $\mu$, defined by
\begin{align}
  f\push(A)= f\bigpar{\gf\qw(A)}
\end{align}
for any measurable $A\subseteq Y$.
Equivalently, if $\xi\sim\mu$, then $\gf\push(\mu)$ is the distribution of
$\gf(\xi)$ (which is a random variable in $Y$).

A measurable map $\gf:(X,\mu)\to (Y,\nu)$, where $(X,\mu)$ and $(Y,\nu)$ are
probability spaces, is \emph{\mpp} if $\gf\push(\mu)=\nu$.

\subsection{The Prohorov distance}\label{SSProhorov}
Let $X=(X,d)$ be a complete separable metric space.

If $B$ is a subset of $X$ and $\eps>0$, let
\begin{align}
  B^\eps:=\set{x: d(x,B) \le \eps}.
\end{align}

The \emph{Prohorov distance} 
$\dPa(\mu,\mu')$ (where $a>0$ is a parameter, usually chosen to be 1)
between two probability measures $\mu$ and
$\mu'$ in  $\cP(X)$ is defined as the infimum of $\eps>0$
such that, for every Borel set $B\subseteq X$, 
\begin{align}\label{dpr}
\mu'(B)\le \mu (B^\eps)+a\eps.
\end{align}
It is easily seen that this is symmetric in $\mu$ and $\mu'$, and that
\eqref{dpr} (for every $B$) implies also
\begin{align}\label{dpr'}
\mu(B)\le \mu' (B^\eps)+a\eps.
\end{align}

\begin{remark}
  Note that different choices of the parameter
$a$ yield distances that are equivalent within constant factors.
(We use $a$ only for greater flexibility and precision in the equivalences
below.) In fact, $\dPa$ equals $a\qw \dPx1$ evaluated in the 
metric space $(X,ad)$ with a rescaled metric.
\end{remark}

\begin{remark}\label{Rkyfan}
  The Prohorov distance  has also a dual formulation: $\dPa(X'X')$ equals the minimal
$\eps\ge0$ such that there exist two random variables $\xi\sim\mu$ 
and $\xi'\sim\mu'$ in $X$ such that
\begin{align}\label{kyfan}
  \P\bigpar{d(\xi,\xi')>\eps} \le a\eps.
\end{align}
See \cite[Corollary 7.5.2]{Rachev}.
\end{remark}

\begin{remark}
  The Prohorov distance is a metric on 
$\cP(X)$ that generates the weak topology
\cite[Appendix III]{Billingsley},
\cite[Theorem 8.3.2]{Bogachev}.
\end{remark}

See further \cite{Billingsley}, \cite{Bogachev}, \cite{Rachev} and the
survey \cite{SJN21}.


\section{The \GP{} distance}\label{SGP}

We give in this section several definitions of 
a (pseudo)distance between two 
(complete, separable) 
\mms{s}
$X=(X,d,\mu)$ and $X'=(X',d',\mu')$.
The definitions are all equivalent within constant factors,
and we can choose any of them as the definition 
of the \emph{\GP} distance $\dGP(X,X')$.
(Our default choice is $\dGP:=\dGPx1$.)

The original definition by 
\citet[Section $3\frac12$.3]{Gromov} can be written as follows.
Here $a>0$ is an arbitrary parameter; the distances $\dboxa$
for different values of
$a$ are obviously equivalent, and usually we choose $a=1$.

\begin{definition}\label{D1}
  $\dboxa(X,X')$ is the infimum of $\eps>0$ such that there exist \mpp{} maps 
$\gf:\oi\to X$ and $\gf':\oi\to X'$ and a set $W_\eps\subseteq\oi$ such that
\begin{align}\label{d1a}
&\leb(W_\eps)\le a\eps
\\
 &\bigabs{d\bigpar{\gf(x_1),\gf(x_2)}-d'\bigpar{\gf'(x_1),\gf'(x_2)}}\le \eps,
\quad x_1,x_2\in\oi\setminus W_\eps.
\label{d1}
\end{align}
\end{definition}

We give an alternative, equivalent, definition.
Recall that a \emph{coupling} of the measures $\mu$ on $X$ and $\mu'$ on
$X'$ is a probability measure $\nu$ on $X\times X'$ such that the marginals
are $\mu$ and $\mu'$. Recall also that a \emph{relation} between $X$ and
$X'$ is any subset $R\subseteq X\times X'$.

\begin{definition}\label{D2}
  $\dboxa(X,X')$ is the infimum of $\eps>0$ such that
there exist 
a Borel relation $R\subseteq X\times X'$ and
a coupling $\nu$ of $\mu$ and $\mu'$, 
such that
\begin{align}\label{p1a}
&\nu(R)\ge 1-a\eps,\\&
  (x_1,x_1'),(x_2,x'_2)\in R \implies
\bigabs{d(x_1,x_2) - d'(x_1',x'_2)} \le \eps.
\label{p1}
\end{align}
\end{definition}

It is easily seen that we may require the relation $R$ to be closed.

\begin{prop}\label{P1}
\refDs{D1} and \ref{D2} agree.
\end{prop}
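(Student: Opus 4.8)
The plan is to show that a given $\eps>0$ satisfies the requirements of \refD{D1} if and only if it satisfies those of \refD{D2}; then the two sets of admissible $\eps$ coincide, and hence so do the two infima. In particular the definitions agree exactly, with no constant-factor loss. Each direction turns the data of one definition into the data of the other while keeping the same $\eps$.

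For the implication \refD{D1}$\Rightarrow$\refD{D2}, suppose $\eps$ is witnessed by \mpp{} maps $\gf\colon\oi\to X$ and $\gf'\colon\oi\to X'$ and a set $W_\eps$. I would put $\psi:=(\gf,\gf')\colon\oi\to X\times X'$ and define the coupling $\nu:=\psi\push(\leb)$; its marginals are $\gf\push(\leb)=\mu$ and $\gf'\push(\leb)=\mu'$ because $\gf,\gf'$ are \mpp. For the relation I take the closure of the image, $R:=\overline{\psi(\oi\setminus W_\eps)}$, which is closed and hence Borel. Since $\oi\setminus W_\eps\subseteq\psi\qw(R)$, we get $\nu(R)=\leb\bigpar{\psi\qw(R)}\ge\leb(\oi\setminus W_\eps)\ge1-a\eps$, which is \eqref{p1a}. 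Moreover the map $((p,p'),(q,q'))\mapsto\bigabs{d(p,q)-d'(p',q')}$ is continuous, and by \eqref{d1} it is $\le\eps$ on $\psi(\oi\setminus W_\eps)\times\psi(\oi\setminus W_\eps)$, hence by continuity also on its closure $R\times R$, which is exactly \eqref{p1}. Passing to the closure is what lets me sidestep the fact that the Borel image $\psi(\oi\setminus W_\eps)$ need only be analytic, and it is consistent with the remark that $R$ may be taken closed.

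For the converse \refD{D2}$\Rightarrow$\refD{D1}, suppose $\eps$ is witnessed by a Borel relation $R$ and a coupling $\nu$. Here the key ingredient, and the main obstacle, is the standard representation fact that, since $X\times X'$ is a complete separable metric space, the Borel probability measure $\nu$ equals $\psi\push(\leb)$ for some measurable map $\psi=(\gf,\gf')\colon\oi\to X\times X'$; that is, $(\oi,\leb)$ is a universal source from which any Polish probability law can be realized. Granting this, $\gf$ and $\gf'$ are \mpp{} since their pushforwards of $\leb$ are the marginals $\mu$ and $\mu'$ of $\nu$. I then set $W_\eps:=\psi\qw\bigpar{(X\times X')\setminus R}$, which is measurable with $\leb(W_\eps)=1-\nu(R)\le a\eps$, giving \eqref{d1a}. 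Finally, for $x_1,x_2\in\oi\setminus W_\eps$ the pairs $(\gf(x_i),\gf'(x_i))$ lie in $R$, so \eqref{p1} gives $\bigabs{d(\gf(x_1),\gf(x_2))-d'(\gf'(x_1),\gf'(x_2))}\le\eps$, which is \eqref{d1}. Everything else is routine bookkeeping, and since each direction preserves the value of $\eps$, the infima in \refDs{D1} and \ref{D2} are equal.
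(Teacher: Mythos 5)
Your proof is correct and follows essentially the same route as the paper: the forward direction pushes forward Lebesgue measure under $(\gf,\gf')$ and takes the closure of the image relation, and the converse invokes the standard fact that any Borel probability measure on a Polish space is the push-forward of $\leb$ under some measurable map $\oi\to X\times X'$ (the paper cites \cite[Theorem 3.19 or Lemma 3.22]{Kallenberg} for this). Your explicit remark that taking the closure also repairs the potential non-Borelness of the analytic image is a nice point of care that the paper leaves implicit.
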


\begin{proof}
  Given $\gf,\gf'$ and $W_\eps$ as in \refD{D1}, 
define 
\begin{align}\label{p1b}
R_0:=\bigset{\bigpar{\gf(x),\gf'(x)}:x\in \oi\setminus W_\eps}.  
\end{align}
Then \eqref{d1} shows that \eqref{p1} holds for $R_0$.
Let $R:=\overline{R_0}$; then \eqref{p1} holds by continuity.

Furthermore, let $\Phi:=(\gf,\gf'):\oi\to X\times X'$ and let
$\nu$ be the probability measure $\Phi\push(\leb)$ on $X\times X'$.
Then $\nu$ is a coupling of $\mu$ and $\mu'$, and
\begin{align}\label{p1c}
  \nu(R) = \leb\bigpar{\Phi\qw(R)}
\ge \leb\bigpar{\oi\setminus W_\eps}\ge 1-a\eps.
\end{align}
Hence, the conditions in \refD{D2} hold.

Conversely, suppose that $R$ and $\nu$ are as in \refD{D2}.
Then $\nu$ is a probability measure on the Polish space $X\times X'$, and
thus there exists a measure preserving map $\Phi:\oi\to (X\times X',\nu)$,
see \cite[Theorem 3.19 or Lemma 3.22]{Kallenberg}.
Write $\Phi=(\gf,\gf')$. Then, $\gf$ and $\gf'$ are \mpp{} maps $\oi\to X$
and $\oi\to X'$. 
Let $W_\eps:=\oi\setminus \Phi\qw(R)$. Then \eqref{d1} holds by \eqref{p1},
and
\begin{align}
  \leb(W_\eps) = 1-\leb(\Phi\qw(R))
= 1-\nu(R) \le a\eps.
\end{align}
Hence, $\gf,\gf'$ and $W_\eps$ are as in \refD{D1}.
\end{proof}

Another metric was defined
by \citet[p.~762]{Villani}
and \citet{Greven++}.

\begin{definition}\label{D3}
  $\dGPa(X,X')$ equals the infimum of $\eps>0$ such that  
there exists a metric space $Z$ with subspaces $Y,Y'\subseteq Z$
and isometries $\gf:X\to Y$ and $\gf':X'\to Y'$ such that the Prohorov
distance
\begin{align}\label{d3}
\dPx{a}\bigpar{\gf\push(\mu) ,\gf'\push(\mu')} \le \eps. 
\end{align}
\end{definition}
In other words, 
$\dGPa(X,X')$ is the infimum of the Prohorov distance 
between $\gf\push(\mu)$ and $\gf'\push(\mu')$ over all metric spaces $Z$ and
isometric embeddings $\gf:X\to Z$ and $\gf':X'\to Z$.

Note that we may assume that the metric space $Z$ in \refD{D3} is complete
and separable, since otherwise we may replace $Z$ by first its completion
and then the closure of $Y\cup Y'$ (or conversely).

\begin{prop}[{\citet{Lohr}}]\label{P2}
For any 
\mms{s} $X$ and $X'$ and any
$a>0$, 
\begin{align}\label{p2}
\dboxa(X,X') = 2\dGPx{2a}(X,X')
.\end{align}
\end{prop}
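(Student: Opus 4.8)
The plan is to prove the two inequalities $\dboxa(X,X')\le 2\dGPx{2a}(X,X')$ and $2\dGPx{2a}(X,X')\le\dboxa(X,X')$ separately. Throughout I would work with \refD{D2} in place of \refD{D1} (permissible by \refP{P1}), and use the dual, coupling, formulation of the Prohorov distance (\refR{Rkyfan}) as the bridge between the ``relation plus coupling'' picture of \refD{D2} and the ``common embedding'' picture of \refD{D3}.

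For the inequality $\dboxa(X,X')\le 2\dGPx{2a}(X,X')$ I would fix $\eps>\dGPx{2a}(X,X')$ and take a metric space $Z$ with isometric embeddings $\gf,\gf'$ as in \refD{D3} satisfying $\dPx{2a}\bigpar{\gf\push(\mu),\gf'\push(\mu')}<\eps$. By \refR{Rkyfan} there is then a coupling $\nu_Z$ of $\gf\push(\mu)$ and $\gf'\push(\mu')$ with $\nu_Z\set{d_Z>\eps}\le 2a\eps$, which I would pull back through the isometric embeddings (injective with measurable inverses on the images $Y,Y'$) to a coupling $\nu$ of $\mu$ and $\mu'$. Setting $R:=\set{(x,x'):d_Z(\gf(x),\gf'(x'))\le\eps}$ gives a closed, hence Borel, relation with $\nu(R)\ge 1-2a\eps$, and for $(x_1,x_1'),(x_2,x_2')\in R$ the triangle inequality in $Z$ together with the isometries yields $\bigabs{d(x_1,x_2)-d'(x_1',x_2')}\le d_Z(\gf(x_1),\gf'(x_1'))+d_Z(\gf(x_2),\gf'(x_2'))\le 2\eps$. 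Thus $R$ and $\nu$ witness \refD{D2} with $\eps$ replaced by $2\eps$, so $\dboxa(X,X')\le 2\eps$; letting $\eps\downto\dGPx{2a}(X,X')$ gives this direction.

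The reverse inequality $2\dGPx{2a}(X,X')\le\dboxa(X,X')$ is where the constants are forced and where the main difficulty lies. Fixing $\eps>\dboxa(X,X')$ and taking $R,\nu$ as in \refD{D2} (the case $a\eps\ge 1$ being trivial, as then the Prohorov bound below is vacuous; so we may assume $a\eps<1$, whence $\nu(R)>0$ and $R\ne\emptyset$), I would glue the spaces by setting $Z:=X\sqcup X'$, retaining $d$ on $X$ and $d'$ on $X'$ and defining the cross distances $d_Z(x,x'):=\tfrac\eps2+\inf_{(u,u')\in R}\bigpar{d(x,u)+d'(u',x')}$. The hard part is to check that $d_Z$ is genuinely a metric: the mixed triangle inequalities with a single cross leg hold for any nonnegative additive constant, but the two-cross-leg inequality $d(x_1,x_2)\le d_Z(x_1,x')+d_Z(x',x_2)$ reduces, after applying the distortion bound $d(u,v)\le d'(u',v')+\eps$ for $(u,u'),(v,v')\in R$ from \refD{D2}, exactly to the requirement that the added constant be at least $\tfrac\eps2$ --- which is precisely what produces the factor $2$ and the doubled parameter $2a$. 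Granting this, the inclusions $X,X'\hookrightarrow Z$ are isometric, every $(x,x')\in R$ satisfies $d_Z(x,x')\le\tfrac\eps2$ (take $(u,u')=(x,x')$), and pushing $\nu$ into $Z\times Z$ gives a coupling of $\gf\push(\mu),\gf'\push(\mu')$ with $\nu\set{d_Z>\tfrac\eps2}\le 1-\nu(R)\le a\eps=2a\cdot\tfrac\eps2$. Hence \refR{Rkyfan} yields $\dPx{2a}\le\tfrac\eps2$ and so $\dGPx{2a}(X,X')\le\tfrac\eps2$; letting $\eps\downto\dboxa(X,X')$ and combining with the first inequality gives $\dboxa(X,X')=2\dGPx{2a}(X,X')$.
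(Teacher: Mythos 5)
Your proof is correct and follows essentially the same route as the paper's: one direction glues $X$ and $X'$ along $R$ with the additive $\eps/2$ offset and applies the coupling form of the Prohorov distance (\refR{Rkyfan}), while the other defines $R$ as the pairs at distance at most $\eps$ in the common embedding space and reads off \refD{D2} with $\eps$ replaced by $2\eps$. Your extra care with the corner case $a\eps\ge1$ (where $R$ could be empty and the glued cross-distance infinite) and your explicit verification of the two-cross-leg triangle inequality are details the paper glosses over or delegates to a citation of \cite{Miermont}, but they change nothing essential.
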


\begin{proof}
We argue as for
the corresponding result for the \GHP{} distance in \cite{Miermont};
see also \cite[Section 7.3]{Burago}.

  Let $\eps>\dboxa(X,X')$ and let $R$ and $\nu$ be as in \refD{D2}.
Let $Z:=X\sqcup X'$ be the disjoint union of $X$ and $X'$, 
and define a metric $\gd$ on $Z$ that equals $d$ on $X$, $d'$ on $X'$, and,
for $x\in X$ and $x'\in X'$,
\begin{align}
  \gd(x,x') := \inf\bigpar{d(x,y) + \eps/2+d'(y',x'): (y,y')\in R}.
\end{align}
It is easily verified that this really defines a metric, see 
\eg
\cite[Proof of Proposition 6]{Miermont}, and that $\gd(x,x')=\eps/2$
when $(x,x')\in R$.

Regard $X$ and $X'$ as subspaces of $Z$,
and 
let $(\xi,\xi')$ be a random variable in $X\times X'$ with distribution
$\nu$. 
If $(\xi,\xi')\in R$, then $\gd(\xi,\xi')=\eps/2$; hence, 
\begin{align}
  \P\bigpar{\gd(\xi,\xi')>\eps/2}
\le \P\bigpar{(\xi,\xi')\notin R}
= 1-\nu(R) \le a\eps
=2a\cdot\eps/2
.\end{align}
Hence, see \refR{Rkyfan}, $\dPx{2a}(\mu,\mu')\le \eps/2$.
Consequently, by \refD{D3},
\begin{align}
\dGPx{2a}(X,X')\le \dPx{2a}(\mu,\mu')\le \eps/2
.\end{align}

Conversely, suppose that 
$\dGPx{2a}(X,X')\le \eps$.
Then
there exist $Y, Y'$ and $\gf,\gf'$ as in 
\refD{D3}, with $a$ replaced by $2a$.
We may assume that $X=Y$ and $X'=Y'$.
Thus,
\begin{align}
\dPx{2a}(\mu,\mu')\le \eps.  
\end{align}
By \refR{Rkyfan}
there exist random variables 
$\xi$ and $\xi'$ in $Z$ such that
\begin{align}\label{kyfan2}
  \P\bigsqpar{d(\xi,\xi')>\eps} \le 2a\eps.
\end{align}
Let $R:=\bigset{(x,x')\in X\times X':d(x,x')\le \eps}$.
This is a closed relation, 
and it follows from
\eqref{kyfan2} that
if $\nu$ is the distribution of $(\xi,\xi')$, then
\begin{align}
  \nu(R)=\P\bigsqpar{(\xi,\xi')\in R}
=\P\bigsqpar{d(\xi,\xi')\le\eps}
\ge 1-2a\eps.
\end{align}
Furthermore, \eqref{p1} holds with $\eps$ replaced by $2\eps$.
Hence, \refD{D2} shows that
$\dboxa(X,X')\le 2\eps$.
\end{proof}

\begin{remark}
Definitions \ref{D2} and \ref{D3} 
are analogues of similar definitions in \cite{Burago} and
\cite{Miermont} for the related \GH{} and \GHP{} distances.
In particular,  they 
correspond to the
definition and Proposition 6 in \cite[Section 6.2]{Miermont} if we ignore
the Hausdorff part; note that the only significant difference between
the conditions in \cite[Proposition 6]{Miermont} and in \refD{D2} above
is that in \cite{Miermont} (for $\dGHP$), the coupling $R$ is supposed to 
be a \emph{correspondence}, \ie, the projections of $R\to X$ and $R\to X'$
are onto.
(In other words, every $x\in R$ is related to some $x'\in X'$, and conversely.)
\end{remark}

\begin{remark}\label{R0}
It is easy to see, perhaps simplest from \refD{D2}, that the triangle
inequality holds for $\dboxa$ and $\dGPa$; hence the distances above are
pseudometrics. 
Note that $\dGP(X,X')=0$ may hold not only for isomorphic $X$ and $X'$ (in
the obvious sense that there exists a \mpp{} bijection).
In fact, for any $X=(X,\mu)$, 
if we let $X':=\supp\mu$,
then
\begin{align}
\dGP\bigpar{(X,\mu), (X',\mu)}=0.  
\end{align}
We will see in \refT{TG1} that this is essentially the only way that $\dGP$
fails to be a metric.
\end{remark}

We note two basic results by \citet{Gromov}, to which we refer for proofs.

\begin{theorem}[{\citet[Corollary in $3\frac12.6$]{Gromov}}]\label{TG1}
If $X=(X,\mu)$ and $(X',\mu')$ are \mms{s}, then
  $\dGP(X,X')=0$ if and only if 
$(\supp\mu,\mu)$ and $(\supp\mu',\mu')$ are isomorphic 
\mms.

In other words, $\dGP$ is a metric on the set $\cX$ of equivalence classes
(under isomorphism) of \mms{s} $(X,\mu)$
with full support, $\supp \mu=X$.
\nopf
\end{theorem}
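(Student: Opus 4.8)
The plan is to prove both implications, after first reducing to the case of full support. By \refR{R0} we have $\dGP(X,(\supp\mu,\mu))=0$ and $\dGP(X',(\supp\mu',\mu'))=0$, so the triangle inequality gives that $\dGP(X,X')=0$ if and only if $\dGP\bigpar{(\supp\mu,\mu),(\supp\mu',\mu')}=0$. Hence it suffices to show, for \mms{s} with full support, that $\dGP=0$ is equivalent to being isomorphic. The ``if'' direction is immediate: a measure preserving isometry between the two spaces furnishes, via \refD{D2}, a relation $R$ (its graph) and a coupling $\nu$ (its image measure) satisfying \eqref{p1a}--\eqref{p1} with $\eps=0$, so $\dGP=0$.

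For the converse, assume $X,X'$ have full support and $\dGP(X,X')=0$. Using \refD{D2}, I would pick for each $n$ a coupling $\nu_n$ of $\mu,\mu'$ and a relation $R_n$ with $\nu_n(R_n)\ge 1-\eps_n$, where $\eps_n\downto 0$, and with distance distortion at most $\eps_n$ on $R_n$. The marginals $\mu,\mu'$ are fixed and tight, so $\set{\nu_n}$ is tight; passing to a subsequence, $\nu_n\to\nu$ weakly for some coupling $\nu$ of $\mu,\mu'$. The key point is that this limiting coupling matches distances \emph{exactly}, almost surely. Consider the continuous function $F\bigpar{(x_1,x_1'),(x_2,x_2')}:=\bigabs{d(x_1,x_2)-d'(x_1',x_2')}$ on $(X\times X')^2$. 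On $R_n\times R_n$ one has $F\le\eps_n$, and $(\nu_n\times\nu_n)(R_n\times R_n)\ge(1-\eps_n)^2\to 1$; since $\nu_n\times\nu_n\to\nu\times\nu$ weakly, the portmanteau theorem applied to the closed set $\set{F\le r}$ (which contains $R_n\times R_n$ once $\eps_n\le r$) yields $(\nu\times\nu)\set{F\le r}\ge\limsup_n(\nu_n\times\nu_n)\set{F\le r}=1$ for every $r>0$. Letting $r\downto0$ gives $(\nu\times\nu)\set{F=0}=1$.

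It remains to manufacture the isomorphism from this ``perfect'' coupling. Let $(\eta_i,\eta_i')$, $i\ge1$, be \iid{} with law $\nu$. Since $F=0$ holds $(\nu\times\nu)$-a.s., almost surely $d(\eta_i,\eta_j)=d'(\eta_i',\eta_j')$ for all $i,j$; in particular $\eta_i=\eta_j$ iff $\eta_i'=\eta_j'$, so $\eta_i\mapsto\eta_i'$ is a well-defined isometry of $\set{\eta_i}$ onto $\set{\eta_i'}$. As $\mu,\mu'$ have full support, almost surely $\set{\eta_i}$ is dense in $X$ and $\set{\eta_i'}$ is dense in $X'$; by completeness this isometry extends uniquely to a surjective isometry $\Psi\colon X\to X'$. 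Finally, by Varadarajan's theorem the empirical measures $\tfrac1n\sum_{i\le n}\delta_{\eta_i}$ and $\tfrac1n\sum_{i\le n}\delta_{\eta_i'}$ converge weakly to $\mu$ and $\mu'$; since $\Psi$ carries the first to the second, continuity of the push-forward under $\Psi$ gives $\Psi\push(\mu)=\mu'$. Thus $\Psi$ is a measure preserving isometry, and $(\supp\mu,\mu)$, $(\supp\mu',\mu')$ are isomorphic.

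The only real obstacle is the passage from the approximate couplings to a single exact one: the distortion bound is available on each $R_n$ separately with its own error $\eps_n$, and these must be promoted into one coupling $\nu$ under which distances agree with probability one. Tightness of the couplings and the portmanteau inequality for the closed sublevel sets of the continuous distortion $F$ are exactly what accomplish this; once $\nu$ is in hand, \iid{} sampling together with the density (full support) and Varadarajan facts assemble the measure preserving isometry routinely.
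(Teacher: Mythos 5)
Your proof is correct. There is, however, nothing in the paper to compare it against line by line: \refT{TG1} is stated without proof, with a reference to Gromov, where the Corollary in $3\frac12.6$ is deduced from the mm-reconstruction theorem ($3\frac12.5$; cf.\ \refR{R=}): $\dGP(X,X')=0$ forces $\tau_\ell(X)=\tau_\ell(X')$ for all $\ell$, and the reconstruction theorem then yields the isomorphism of the supports. Your route is genuinely different and self-contained: after the (correct) reduction to full support via \refR{R0} and the trivial ``if'' direction, you extract from the near-optimal couplings of \refD{D2} a weak subsequential limit $\nu$ (tightness holds because the marginals $\mu,\mu'$ are fixed), and the portmanteau inequality for the closed sublevel sets $\set{F\le r}$ upgrades approximate distance-matching to exact matching $\nu\times\nu$-a.s.; then \iid{} sampling from $\nu$, full support, and completeness produce a surjective isometry $\Psi$, and Varadarajan's theorem gives $\Psi\push(\mu)=\mu'$. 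Each step checks out: the products $\nu_n\times\nu_n$ do converge weakly to $\nu\times\nu$ on a Polish product space, the countably many a.s.\ events combine as claimed, and the isometric extension is surjective because its image is complete and contains a dense set. In effect, your compactness-plus-sampling step proves exactly the special case of the reconstruction theorem that is needed here, so your argument is more elementary; what the cited route buys instead is the stronger statement of \refR{R=} (equality of all $\tau_\ell$ implies isomorphism), which the paper needs anyway at the end of the proof of \refT{TG}. One small streamlining of your ending: Varadarajan is not needed, since sampling one extra pair $(\eta_0,\eta_0')\sim\nu$ independent of the others gives a.s.\ $d(\eta_0,\eta_i)=d'(\eta_0',\eta_i')$ for all $i\ge1$, whence by density and continuity $\eta_0'=\Psi(\eta_0)$; thus $\nu$ is concentrated on the graph of $\Psi$, and $\Psi\push(\mu)=\mu'$ follows by taking marginals.
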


\begin{theorem}[{\citet[Corollary in $3\frac12.12$]{Gromov}}]\label{TG2}
The metric space $(\cX,\dGP)$ is complete and separable.
\nopf
\end{theorem}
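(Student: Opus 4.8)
The plan is to prove separability and completeness separately, using \refD{D3} (the Prohorov formulation) as the main tool, together with the elementary observation that whenever two \mms{s} are realized via isometric embeddings into a common metric space $Z$, \refD{D3} gives at once the bound $\dGP(X,X')\le\dP\bigpar{\gf\push(\mu),\gf'\push(\mu')}$. In particular, if $(X,d)$ carries two measures $\mu,\mu'$, then taking $Z=X$ with the identity embeddings yields $\dGP\bigpar{(X,\mu),(X,\mu')}\le\dP(\mu,\mu')$.

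For separability I would take as candidate dense set $\cD$ the countable family of finite \mms{s} on a finite point set with rational mutual distances and rational weights. Given an arbitrary $(X,d,\mu)\in\cX$, I first approximate $\mu$ by finitely supported measures: let $\xi_1,\xi_2,\dots$ be \iid{} with distribution $\mu$ and set $\mu_n:=n\qw\sum_{i=1}^n\delta_{\xi_i}$. By Varadarajan's theorem $\mu_n\to\mu$ weakly \as, hence $\dP(\mu,\mu_n)\to0$ \as{} (the Prohorov distance metrizes weak convergence), and the bound above gives $\dGP\bigpar{(X,\mu),(X,\mu_n)}\le\dP(\mu,\mu_n)\to0$. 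Thus finitely supported \mms{s} are dense. A finite \mms{} is in turn approximated by one with rational data: perturbing its finitely many distances by less than $\eps$ and its weights by a small amount, \refD{D2} applied with the diagonal relation and a near-diagonal coupling gives $\dGP<O(\eps)$. Hence $\cD$ is dense and $(\cX,\dGP)$ is separable.

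For completeness, let $(X_n)$ be $\dGP$-Cauchy. Passing to a subsequence I may assume $\dGP(X_n,X_{n+1})<2^{-n}$, and since a Cauchy sequence with a convergent subsequence converges it suffices to make this subsequence converge. By \refD{D3} (and the remark that the ambient $Z$ there may be taken complete and separable) there is, for each $n$, a complete separable $Z_n$ containing isometric copies of $X_n$ and $X_{n+1}$ whose pushed-forward measures are within Prohorov distance $2^{-n}$. The crux is to realize \emph{all} the $X_n$ isometrically inside a single complete separable space $Z$ so that these estimates are preserved. This can be done by amalgamating the $Z_n$ successively along their common copy of $X_{n+1}$, equipping the gluing with the infimum-over-chains metric (which remains a genuine metric since the pieces are joined in a chain); or, more cleanly, by embedding everything into the Urysohn universal space $\mathbb U$, starting from $X_1\hookrightarrow\mathbb U$ and inductively extending the embedding of $X_n$ to an isometric embedding of $Z_n$ (and hence of $X_{n+1}$) using the extension property of $\mathbb U$. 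Either way one obtains isometric embeddings $X_n\hookrightarrow Z$ with $\dP(\mu_n,\mu_{n+1})<2^{-n}$ for the images in $Z$.

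Once this is in place the argument finishes quickly: $(\mu_n)$ is Cauchy in $(\cP(Z),\dP)$, which is complete because $Z$ is complete and separable, so $\mu_n\to\mu_\infty$ weakly for some $\mu_\infty\in\cP(Z)$. Setting $X_\infty:=(\supp\mu_\infty,\mu_\infty)$ gives a complete separable \mms{} with full support, hence a point of $\cX$, and the bound above yields $\dGP(X_n,X_\infty)\le\dP(\mu_n,\mu_\infty)\to0$; the original sequence therefore converges. I expect the main obstacle to be precisely the simultaneous isometric embedding step: controlling the geometry of the amalgamated space (or invoking the correct universality/extension property of $\mathbb U$) so that the consecutive Prohorov estimates genuinely survive inside one ambient complete separable space.
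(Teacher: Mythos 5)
The paper does not prove this statement at all: \refT{TG2} is quoted from Gromov's Corollary in $3\frac12.12$ with the proof deliberately omitted, so there is no in-paper argument to compare against; judged on its own, your proof is correct in outline and is essentially the standard argument, namely the one by which \citet{Greven++} show that the Gromov--Prohorov space is Polish. For separability, the chain (general $\to$ finitely supported via empirical measures and Varadarajan $\to$ rational data) is right, using the basic observation $\dGP\bigpar{(X,\mu),(X,\mu')}\le \dP(\mu,\mu')$; the one point needing care is that the rational perturbation of a finite distance matrix must preserve the triangle inequality, which is arranged, e.g., by perturbing every distance upward by a rational amount in $[\delta/2,\delta]$, after which \refD{D2} with the diagonal relation and the obvious coupling gives $\dGP=O(\delta)$. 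For completeness, the crux you correctly identify --- realizing all $X_n$ isometrically in one complete separable ambient space with the consecutive Prohorov bounds intact --- is handled exactly by your first proposal (this is the gluing lemma in \cite{Greven++}): glue $Z_n$ and $Z_{n+1}$ along the common isometric copy of $X_{n+1}$, which is closed in both since $X_{n+1}$ is complete, iterate, and take the completion of the increasing union. Two facts used implicitly there deserve explicit mention: the chain gluing restricts to the original metric on each piece, and the Prohorov distance between two measures carried by a subspace can only decrease when computed in a larger ambient space, so the estimates $\dP<2^{-n}$ do survive. The Urysohn-space route also works, via the extension property of $\mathbb{U}$ for isometric embeddings of separable spaces, but it is the less elementary of the two. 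With those points made explicit the argument closes as you say: $(\cP(Z),\dP)$ is complete since $Z$ is complete and separable, the weak limit $\mu_\infty$ exists, $(\supp\mu_\infty,\mu_\infty)$ is a point of $\cX$, and a Cauchy sequence with a convergent subsequence converges.
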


\section{Convergence}\label{Sconv}

\citet{Gromov} considered also convergence of metric spaces in terms of 
arrays of distances between points in the following way (in our notation).

For an integer $\ell\ge1$, let $\cM_\ell$ be the space of real $\ell\times
\ell$ matrices; 
note that $\cM_\ell=\bbR^{\ell^2}$ is a complete separable metric space.

For a metric space $X=(X,d)$ and $\ell\ge1$, let
$\rho_\ell:X^\ell\to \cM_\ell$ be the map given by the entries
\begin{align}\label{rhor}
  \rho_\ell(x_1,\dots,x_\ell)_{ij}=
  \rho_\ell(x_1,\dots,x_\ell;X,d)_{ij}:=
    d(x_i,x_j).
\end{align}
If $X=(X,d,\mu)$ is a \mms,
define for $\ell\ge1$, 
the  measure
\begin{align}\label{taur}
  \tau_\ell(X)=\tau_\ell(X,d,\mu):={\rho_\ell}\push\xpar{\mu^\ell}\in \cP(\cM_\ell),
\end{align}
the push-forward of the measure
$\mu^\ell\in \cP(X^\ell)$ along $\rho_\ell$. 
In our setting with a probability measure $\mu$,
we can, equivalently, define $\tau_\ell$ by
letting $\xi_1,\dots,\xi_\ell$ be
\iid{} (independent, identically distributed)
 random points in $X$ with $\xi_i\sim\mu$;
then
\begin{align}\label{taurus}
  \tau_\ell(X):=\cL\bigpar{\rho_\ell(\xi_1,\dots,\xi_\ell;X)}, 
\end{align}
the distribution of  the random matrix 
$\rho_\ell(\xi_1,\dots,\xi_\ell)\in \cM_\ell$.

We then define convergence of a sequence of \mms{} as follows.
(All unspecified limits below are as \ntoo.)

\begin{definition}\label{D0}
Let $(X_n)_1^\infty$ and $X$ be \mms{s}.
We say that $X_n\Gto X$ if
for every $\ell\ge1$,
\begin{align}\label{d0}
\tau_\ell(X_n)\to \tau_\ell(X)  
\qquad\text{in }\cP(\cM_\ell)  
.\end{align}
\end{definition}
By \eqref{taurus}, the condition \eqref{d0} can also be written
\begin{align}\label{d0d}
\rho_\ell(\xi\nn_1,\dots,\xi\nn_\ell;X_n)
\dto  
\rho_\ell(\xi_1,\dots,\xi_\ell;X) , 
\end{align}
where $(\xi\nn_i)$ are \iid{} random points in $X_n$ with
$\xi\nn_i\sim\mu_n$.

In fact, as stated in the next theorem, convergence in this sense 
is equivalent to convergence
in the \GP{} distance. 

\begin{theorem}[{\citet[Theorem 5]{Greven++}}]  \label{TG}
Let $(X_n)_1^\infty$ and $X$ be \mms{s}.
Then $X_n\Gto X$ if and only if
$\dGP(X_n,X)\to0$.
\end{theorem}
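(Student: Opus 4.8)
The plan is to prove the two implications separately. The implication $\dGP(X_n,X)\to0\implies X_n\Gto X$ is the routine one, obtained by coupling; the converse is where the real work lies.

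For the forward direction, fix $\ell\ge1$; I want $\tau_\ell(X_n)\to\tau_\ell(X)$ in $\cP(\cM_\ell)$. By \refP{P2} (and the equivalence of the parameter~$a$), $\dGP(X_n,X)\to0$ gives $\dbox(X_n,X)\to0$, so \refD{D2} supplies, for each $n$, a coupling $\nu_n$ of $\mu_n$ and $\mu$ together with a relation $R_n$ such that $\nu_n(R_n)\ge1-\eps_n$ and $\bigabs{d_n(x_1,x_2)-d(x_1',x_2')}\le\eps_n$ whenever $(x_1,x_1'),(x_2,x_2')\in R_n$, with $\eps_n\to0$. Drawing $\ell$ independent pairs $(\xi_i,\eta_i)\sim\nu_n$ produces $\iid$ points $\xi_i\sim\mu_n$ and $\eta_i\sim\mu$ on a common probability space; on the event that all $\ell$ pairs lie in $R_n$, which has probability at least $1-\ell\eps_n$, the two distance matrices $\rho_\ell(\xi_1,\dots,\xi_\ell;X_n)$ and $\rho_\ell(\eta_1,\dots,\eta_\ell;X)$ differ by at most $\eps_n$ in each entry. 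Thus these coupled matrices, whose laws are $\tau_\ell(X_n)$ and the fixed law $\tau_\ell(X)$, converge to one another in probability, and hence $\tau_\ell(X_n)\to\tau_\ell(X)$.

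For the converse I would run a subsequence argument built on precompactness and reconstruction. Since $\tau_\ell(X_n)\to\tau_\ell(X)$ weakly for each $\ell$, the family $\{\tau_\ell(X_n)\}_n\cup\{\tau_\ell(X)\}$ is a convergent sequence with its limit in the Polish space $\cP(\cM_\ell)$, hence tight; Gromov's precompactness criterion (tightness of the distance-matrix distributions for every~$\ell$) then makes $\{X_n\}$ relatively compact in $(\cX,\dGP)$. It now suffices to show that every subsequence of $(X_n)$ has a further subsequence converging to $X$ in $\dGP$. Relative compactness gives a further subsequence $X_{n_j}\to X'$ in $\dGP$ for some $X'\in\cX$; the already-proved forward direction forces $\tau_\ell(X_{n_j})\to\tau_\ell(X')$, while by hypothesis $\tau_\ell(X_{n_j})\to\tau_\ell(X)$, so by uniqueness of weak limits $\tau_\ell(X')=\tau_\ell(X)$ for all~$\ell$. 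Gromov's reconstruction theorem---that the array $(\tau_\ell)_\ell$ is a complete isomorphism invariant, the content beyond \refT{TG1}---then yields $\dGP(X',X)=0$, i.e.\ $X_{n_j}\to X$ in $\dGP$.

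The genuine difficulty lies entirely in the converse, and concentrates in its two external inputs, both due to Gromov: the precompactness criterion and the reconstruction theorem. One can instead argue the converse more directly, bounding $\dGP(X_n,X)\le\E\bigsqpar{\dGP((X_n,\mu_n^{(\ell)}),X_n)}+\E\bigsqpar{\dGP((X_n,\mu_n^{(\ell)}),(X,\mu^{(\ell)}))}+\E\bigsqpar{\dGP((X,\mu^{(\ell)}),X)}$, where $\mu_n^{(\ell)}$ and $\mu^{(\ell)}$ are empirical measures of $\ell$ sample points: the middle term is handled by the coupling of the forward direction (the empirical spaces are exactly the $\rho_\ell$-matrices), and the last term tends to $0$ as $\ell\to\infty$ by a.s.\ weak convergence of empirical measures together with $\dGP\le\dP$. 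The sticking point is then the first term: one needs the empirical approximation $\E\bigsqpar{\dGP((X_n,\mu_n^{(\ell)}),X_n)}\to0$ to be uniform in~$n$, which is precisely the quantitative content of the precompactness input and is the crux of the whole theorem.
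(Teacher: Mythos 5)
Your forward direction is correct and is essentially the paper's own argument (\refP{P2}, then \refD{D2}, then $\ell$ independent pairs drawn from the coupling). The converse, however, contains a genuine gap. You invoke ``Gromov's precompactness criterion (tightness of the distance-matrix distributions for every $\ell$)'' to conclude that $\set{X_n}$ is relatively compact in $(\cX,\dGP)$. No such criterion exists: tightness of $\set{\tau_\ell(X_n)}_n$ in $\cP(\cM_\ell)$ for every $\ell$ does \emph{not} imply relative compactness in $\dGP$. The paper's own \refR{Rbad} is a counterexample: for the spheres $X_n=S^n$ with normalized surface measure, $\tau_\ell(X_n)\to\nu_\ell$, the point mass at the matrix with entries $\pi/2$ off the diagonal, so each family $\set{\tau_\ell(X_n)}_n$ is tight; yet no subsequence of $(X_n)$ can converge in $\dGP$, because by your own forward direction a subsequential limit $Y$ would have to satisfy $\tau_\ell(Y)=\nu_\ell$ for all $\ell$, \ie{} $d(\xi_1,\xi_2)=\pi/2$ a.s.\ in $Y$, which is impossible for a \mms{} (as shown in that remark). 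So the hypothesis of your claimed criterion holds while its conclusion fails, and your subsequence argument never gets off the ground.

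The correct compactness input (Gromov's ``condition III'', or Theorem 2 of \citet{Greven++}) requires, beyond tightness of distance distributions, a uniform mass-distribution/covering condition: for all $\eps,r>0$ there exist $N$ and $D$ such that, for every $n$, a subset of $X_n$ of $\mu_n$-measure $>1-\eps$ is covered by at most $N$ balls of radius comparable to $r$, with diameter of the union at most $D$. Deriving this from $X_n\Gto X$ --- crucially using that the limiting array comes from an actual \mms{} $X$, not merely that the laws converge to something --- is precisely what the bulk of the paper's proof does: it sets $h(t):=(1-t/r)_+$ and $g_n(x):=\E h\bigpar{d_n(x,\xi\nn)}$, proves $g_n(\xi\nn)\dto g(\xi)$ by the method of moments (each moment is a bounded continuous functional of $\tau_{m+1}$), uses that $g>0$ $\mu$-a.e.\ because every point of $\supp\mu$ has balls of positive measure to extract a level $\kk>0$ with $\P\bigpar{g_n(\xi\nn)\le\kk}<\eps$ uniformly in large $n$, and then runs a ball-packing argument together with tightness of $d_n(\xi\nn_1,\xi\nn_2)$ to produce the covering. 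This step is the heart of the theorem and is absent from your proof; your endgame (subsequence limit identified with $X$ via the reconstruction theorem, \refR{R=}) does agree with the paper's. Your alternative empirical-measure route founders, as you yourself note, on exactly the same missing uniformity, so it does not repair the gap.
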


\begin{remark}
  We use the notation $\Gto$ in honour of Gromov,
since the property \eqref{d0} is studied
in \cite{Gromov}; see \eg{} \cite[$3\frac12$.14]{Gromov},
which discusses the relation with convergence in the \GP{} distance.
However (as far as we know), \refT{TG} is not stated explicitly in
\cite{Gromov}.
(The easy implication $\impliedby$ is implicit in 
\cite[$3\frac12$.6]{Gromov}; the converse is almost, but not quite, in
\cite[$3\frac12$.14]{Gromov}.)
\end{remark}

\begin{remark}\label{R=}
\citet[$3\frac12$.5]{Gromov}
proved the far from obvious fact that if $X$ and $Y$
are two \mms{s} such that the measures have full support, then
\begin{align}
  \label{r=}
\tau_\ell(X)=\tau_\ell(Y), 
\qquad \forall \ell\ge1
\end{align}
if and only if $X$ and $Y$ are isomorphic.

Equivalently, for any \mms{s} $X$ and $Y$, 
\eqref{r=} holds if and only if $\dGP(X,Y)=0$.
(Cf.\ \refT{TG1}, which is proved in \cite{Gromov} using this fact.)
\end{remark}

\begin{remark}\label{Rbad}
  As remarked by \citet[$3\frac12$.14]{Gromov},
if we instead of \eqref{d0} just assume that 
\begin{align}\label{dbad}
  \tau_\ell(X_n)\to \nu_\ell,
\qquad \ell\ge1,
\end{align}
for some probability measures $\nu_\ell\in\cP(\cM_\ell)$,
then $X_n$ does not have to converge, \ie, \eqref{dbad} does not
imply that the limits $\nu_\ell=\tau_\ell(X)$ for some \mms{} $X$.
For example \cite[$3\frac12$.14 and $3\frac12$.18]{Gromov},
if $X_n$ is the unit sphere $S^n$ with normalized surface measure and, say,
the intrinsic (Riemannian) metric $d_n$, 
and $(\xi\nn_i)_i$ are \iid{} uniformly random points in $X_n$,
then,
\begin{align}
  d_n\bigpar{\xi\nn_i,\xi\nn_j}\pto \pi/2,
\end{align}
for any distinct $i$ and $j$, and thus
\eqref{dbad} holds with $\nu_\ell$ the point mass at the matrix 
$\frac{\pi}{2}\bigpar{\indic{i\neq j}_{i,j=1}^\ell}$.
However, there is no \mms{} $(X,d,\mu)$ with 
$\tau_\ell(X)=\nu_\ell$, which would mean that if $\xi_1,\xi_2$ are \iid{} random
points in $X$ with $\xi_i\sim\mu$, then $d(\xi_1,\xi_2)=\pi/2$ a.s.
(This would imply that for any $r<\pi/2$ and $\mu$-\aex{} $x_1\in X$,
$\mu\bigpar{B(x,r)}=0$, and thus $x\notin\supp\mu$; hence $\mu(\supp\mu)=0$,
a contradiction.)
\end{remark}

\begin{remark}\label{Roo}
  We have (implicitly) assumed above that $\ell$ is a finite integer.
However, we can also  use the same definitions \eqref{rhor}--\eqref{taurus}
for $\ell=\infty$, noting that $M_\infty=\bbR^{\infty^2}$ still is a Polish
space, \ie, it can be regarded as a complete separable metric space.
(The choice of metric is of no importance to us.)

It is easy to see that the condition \eqref{d0}, or equivalently
\eqref{d0d},
for every finite $\ell$ is equivalent to the same condition for
$\ell=\infty$.
Hence, $X_n\Gto X$ can also be defined by
$\tau_\infty(X_n)\to\tau_\infty(X)$ in $\cP(\cM_\infty)$, or by
\begin{align}\label{dood}
\rho_\infty(\xi\nn_1,\xi\nn_2,\dots;X_n)
\dto  
\rho_\infty(\xi_1,\xi_2,\dots;X) 
.\end{align}
\end{remark}

\begin{proof}[Proof of \refT{TG}]
Let $X_n=(X_n,d_n,\mu_n)$ and $X=(X,d,\mu)$.
As above, 
let $(\xi\nn_i)_i$ be \iid{} random points in $X_n$ with 
$\xi\nn_i\sim \mu_n$, and
let $(\xi_i)_i$ be \iid{} random  points in $X$ with 
$\xi_i\sim \mu$.
(We may also write $\xi\nn$ and $\xi$
without index when the index does not matter.)

First, suppose that $\dGP(X_n,X)\to0$. By \refP{P2}, 
then $\dbox(X_n,X)\to0$, 
and thus 
there exists a sequence $\eps_n\to0$
such that $\dbox(X_n,X)<\eps_n$ and hence,
see \refD{D2}, there exists a coupling $\nu_n$ of $\mu_n$ and $\mu$
and a Borel relation $R_n\subseteq X_n\times X$ such that
\eqref{p1a}--\eqref{p1} hold for $\nu_n$,  $R_n$ and $\eps_n$ (with $d_n$
and $d$).
We may assume that each pair $(\xi\nn_i,\xi_i)$ has the distribution
$\nu$ on $X_n\times X$; thus 
\begin{align}\label{tja}
\P\bigpar{(\xi\nn_i,\xi_i)\in R_n}\ge 1-\eps_n  
\end{align}
by \eqref{p1a}.
Together with \eqref{p1} and the definition \eqref{rhor}, this implies
\begin{align}
&  \P\Bigpar{\bigabs{\rho_\ell(\xi\nn_1,\dots,\xi\nn_\ell;X_n)
-
\rho_\ell(\xi_1,\dots,\xi_\ell;X)} \le \ell^2\eps_n}
\notag\\&\qquad
\ge
\P\bigpar{(\xi\nn_i,\xi_i)\in R_n, 
i=1,\dots,\ell}
\ge 1-\ell\eps_n  
.\end{align}
This implies easily \eqref{d0d} for each $\ell$, and thus $X_n\Gto X$.

Conversely, suppose that $X_n\Gto X$, so that \eqref{d0d} holds.
Fix $r>0$, let $h(t):=(1-t/r)_+$ for $t\ge0$, and define
$g_n:X_n\to\ooo$ by
\begin{align}
  \label{gn1}
g_n(x):=\E h\bigpar{d_n(x,\xi\nn)}, 
\qquad n\ge1,
\end{align}
and similarly $g:X\to\ooo$ by $g(x):=\E h\bigpar{d(x,\xi)}$ 
Then $0\le h\le 1$ and $h\bigpar{d_n(x,y)}=0$ unless $y\in B(x,r)$;
hence
\begin{align}\label{gnu}
0\le  g_n(x) \le \mu_n\bigpar{B(x,r)}.
\end{align}

For any $m\ge1$, we have
\begin{align}\label{egon}
  g_n(x)^m = \E \prod_{i=1}^mh\bigpar{d_n(x,\xi\nn_i)}
\end{align}
and thus, if we define $H:\cM_{m+1}\to\bbR$ by 
$H\bigpar{(a_{ij})_{i,j}}:=\prod_{i=1}^mh(a_{m+1,i})$,
\begin{align}\label{eldn}
\E \bigsqpar{ g_n(\xi\nn)^m }&
= \E \prod_{i=1}^mh\bigpar{d_n(\xi\nn_{m+1},\xi\nn_i)}
\notag\\&
=\E H\bigpar{\rho_{m+1}(\xi\nn_1,\dots,\xi\nn_{m+1};X_n)}.
\end{align}
Similarly, 
\begin{align}\label{eld}
\E \bigsqpar{ g(\xi)^m }&
=\E H\bigpar{\rho_{m+1}(\xi_1,\dots,\xi_{m+1};X)}.
\end{align}
Note that $H$ is a bounded continuous function on $\cM_{m+1}$.
Consequently, the assumption $X_n\Gto \Xoo$ implies
by \eqref{d0d} and \eqref{eldn}--\eqref{eld}
\begin{align}
  \E [g_n(\xi\nn)^m]
\to
  \E [g\zoo(\xi\nnoo)^m],\qquad m\ge1.
\end{align}
 Thus, by the method of moments (recalling that $g(\xi)$ is bounded by
\eqref{gnu}, and thus its distribution is determined by its moments)
\begin{align}\label{gnto}
  g_n(\xi\nn)\dto g\zoo(\xi\nnoo).
\end{align}

If $x\in\supp\mu$, then $\P\bigpar{d(x,\xi)\le r/2}=\mu\bigpar{B(x,r/2)}>0$
and thus $g(x)>0$. Hence, $g(x)>0$ $\mu$-a.e., \ie, 
\begin{align}\label{g+}
g(\xi)>0 \qquad\textas
\end{align}
Fix $\eps>0$. By \eqref{g+}, there exists $\kk>0$ such that
$\P\bigpar{g(\xi)\le\kk} <\eps$.
Then, by \eqref{gnto}, there exists $n_0$ such that if $n\ge n_0$, then
\begin{align}\label{gneps}
  \P\bigpar{g_n(\xi\nn)\le\kk} <\eps.
\end{align}
Consider only $n\ge n_0$, and let
\begin{align}\label{An}
  A_n:=\bigset{x\in X_n:\mu_n\bigpar{B(x,r)}\ge\kk}.
\end{align}
By \eqref{gnu} and \eqref{gneps}, we have
\begin{align}\label{Ane}
  \mu_n(A_n)
=\P\bigpar{\mu_n(B(\xi\nn,r))\ge\kk}
\ge\P\bigpar{g_n(\xi\nn)\ge\kk}
>1-\eps.
\end{align}
Pick recursively points $x_{n1},x_{n2},\dots, x_{iN}$ in $A_n$ such that the balls
$B_{ni}:=B(x_{ni},r)$ are disjoint, and stop when this is no longer
possible. Since $\mu_n(B_{ni})\ge\kk$ for every $i$ by the definition of
$A_n$, this process has to stop at some $N=N_n\le 1/\kk$.

If $x\in A_n$, then $B(x,r)$ has to intersect some $B_{ni}=B(x_{ni},r)$, and 
thus $x\in B(x_{ni},2r)$. Consequently,
$A_n$ is covered by the $N$ balls $\tB_{ni}:=B(x_{ni},2r)$.
Hence, by \eqref{Ane},
\begin{align}\label{mixmax}
  \mu_n\Bigpar{\bigcup_{i=1}^N \tB_{ni}} \ge \mu_n(A_n)>1-\eps.
\end{align}

Furthermore, since $X_n\Gto X$, and thus by \eqref{d0d}
$\rho_2(\xi\nn_1,\xi\nn_2;X_n)\dto\rho_2(\xi_1,\xi_2;X)$,
we have $d_n(\xi\nn_1,\xi\nn_2)\dto d(\xi_1,\xi_2)$, and thus
the sequence $d_n(\xi\nn_1,\xi\nn_2)$ of random variables is tight
\cite[Lemma 4.8]{Kallenberg}.
Hence, there exists $D<\infty$ such that for all $n$,
\begin{align}\label{ada}
  \P\bigpar{d_n(\xi\nn_1,\xi\nn_2)>D}<\kk^2.
\end{align}
Suppose now that $x,y\in A_n$ and $d_n(x,y)>D+2r$. 
If
$\xi\nn_1\in B(x,r)$ and $\xi\nn_2\in B(y,r)$, then
$d_n(\xi\nn_1,\xi\nn_2)\ge d_n(x,y)-2r>D$;
consequently, using the independence of $\xi\nn_1$ and $\xi\nn_2$
together with the definition \eqref{An} of $A_n$,
\begin{align}
    \P\bigpar{d_n(\xi\nn_1,\xi\nn_2)>D}
\ge
   \P\bigpar{\xi\nn_1\in B(x,r)} \P\bigpar{\xi\nn_2\in B(y,r)}
\ge\kk^2,
\end{align}
which contradicts \eqref{ada}. Consequently,
$d_n(x,y)\le D+2r$ whenever $x,y\in A_n$, \ie,
\begin{align}
  \diam(A_n) \le D+2r
\end{align}
and thus
\begin{align}\label{sw}
  \diam \Bigpar{\bigcup_{i=1}^N \tB_{ni} }\le D+6r.
\end{align}

We have shown that for each positive $\eps$ and $r$, there exists $n_0$, 
$N_0$ ($=1/\kk^2$) and $D_1$ ($=D+6r$) 
such that for each $n\ge n_0$, there exists a collection
$\set{\tB_{ni}}_{i=1}^{N_n}$ of subsets $\tB_{ni}\subseteq X_n$
such that 
\begin{align}
N_n&\le N_0, 
\\
\diam\bigpar{\tB_{ni} }&\le 4r 
\\
  \diam \Bigpar{\bigcup_{i=1}^N \tB_{ni}} &\le D_1,
\\
  \mu_n\Bigpar{X_n\setminus \bigcup_{i=1}^N \tB_{ni}}&<\eps.
\end{align}
Furthermore, by increasing $N_0$ and $D_1$ if necessary,
this holds also for each $n<n_0$, as an easy consequence of the fact that each 
$\mu_n$ 
is a tight measure
(as is every probability measure in a Polish space
\cite[Theorem 1.4]{Billingsley}).

This shows that the sequence $(X_n)$ satisfies condition III in the
corollary on p.~131--132 in \citet{Gromov}; 
since we also assume \eqref{d0}, 
this corollary shows that
$X_n$ converges to some \mms{} $Y$
in $\dbox$, or equivalently in $\dGP$.
Furthermore (as in the proof of this corollary in \cite{Gromov}),
$\dGP(X_n,Y)\to0$ implies $\tau_\ell(X_n)\to\tau_\ell(Y)$
by the first part of the proof, and thus
$\tau_\ell(Y)=\tau_\ell(X)$ for every $\ell\ge1$.
By \refR{R=}, this implies $\dGP(X,Y)=0$, and thus also
$\dGP(X_n,X)\to0$.
\end{proof}

\section*{Acknowledgement}
I thank
Gabriel Berzunza Ojeda for help with references.

\newcommand\AAP{\emph{Adv. Appl. Probab.} }
\newcommand\JAP{\emph{J. Appl. Probab.} }
\newcommand\JAMS{\emph{J. \AMS} }
\newcommand\MAMS{\emph{Memoirs \AMS} }
\newcommand\PAMS{\emph{Proc. \AMS} }
\newcommand\TAMS{\emph{Trans. \AMS} }
\newcommand\AnnMS{\emph{Ann. Math. Statist.} }
\newcommand\AnnPr{\emph{Ann. Probab.} }
\newcommand\CPC{\emph{Combin. Probab. Comput.} }
\newcommand\JMAA{\emph{J. Math. Anal. Appl.} }
\newcommand\RSA{\emph{Random Structures Algorithms} }
\newcommand\DMTCS{\jour{Discr. Math. Theor. Comput. Sci.} }

\newcommand\AMS{Amer. Math. Soc.}
\newcommand\Springer{Springer-Verlag}
\newcommand\Wiley{Wiley}

\newcommand\vol{\textbf}
\newcommand\jour{\emph}
\newcommand\book{\emph}
\newcommand\inbook{\emph}
\def\no#1#2,{\unskip#2, no. #1,} 
\newcommand\toappear{\unskip, to appear}

\newcommand\arxiv[1]{\texttt{arXiv:#1}}
\newcommand\arXiv{\arxiv}

\def\nobibitem#1\par{}

\end{document}